\newtheorem{theorem}{Theorem}[section]
\newtheorem{lemma}{Lemma}[section]
\newtheorem{remark}{Remark}[section]
\newtheorem{cor}{Corollary}[section]
\newtheorem{prop}{Proposition}[section]
\newtheorem{defn}{Definition}[section]
\begin{document}
\begin{frontmatter}

\title{Polygon Vertex Extremality and Decomposition of Polygons}
\author{Wiktor J. Mogilski}
\date{}

\begin{abstract}
In this paper, we show that if we decompose a polygon into two
smaller polygons, then by comparing the number of extremal vertices
in the original polygon versus the sum of the two smaller polygons,
we can gain at most two globally extremal vertices in the smaller
polygons, as well as at most two locally extremal vertices. We then will derive two discrete
Four-Vertex Theorems from our results.
\end{abstract}
\end{frontmatter}
\section{Introduction}

There are many notions of extremality in polygons, the earliest
appeared circa 1813 in \cite{cau}. Recently, a very natural type of
extremality was introduced in \cite{mus1}, one which very
consistently adhered to that of curvature in the smooth case. A
closely related global analogue had already appeared much earlier,
and it as well has a smooth and discrete interpretation. While it is
debatable to whom we attribute this discrete global notion of
extremality, closely related ideas are present in \cite{bos}.

In this paper, we will expound on these two types of extremality by
providing a few observations and facts to build intuition. We will then
discuss the notion of decomposing a polygon and investigate how
this impacts our two types of extremality. We then derive fresh
results relating the number of extremal vertices
of the larger polygon versus the two smaller polygons of
decomposition. While our results will be relevant geometrically on
their own, we will observe that they are closely tied to two
discrete Four-Vertex Theorems pertaining to our two types of
extremality, which follow almost immediately from our stronger results.

We note that we will skip proofs of the more simple results. All
results in this paper are considered with much more detail in
\cite{mog}.

\section{Global and Local Extremality}

We denote by $P$ a polygonal curve, which is a simple piecewise
linear curve with vertices $V_{1},V_{2},...,V_{n}$. When we speak of
a closed polygonal curve, we will refer to it as a polygon. Also, we
restrict our consideration simply to the planar case and all
indices will be taken modulo the number of vertices of the polygonal
curve. The following definition was coined in \cite{pak}:

\begin{defn}
We say that a polygonal curve is generic if the maximal number of
vertices that lie on a circle is three and no three vertices are
collinear.
\end{defn}

Observe that all regular polygons are not generic.

\begin{defn}
Let $C_{ijk}$ be a circle passing through any three vertices
$V_{i}$, $V_{j}$, $V_{k}$ of a polygonal curve. We say that
$C_{ijk}$ is empty if it contains no other vertices of the polygonal
curve in its interior, and we say that it is full if it contains all
of the other vertices of the polygonal curve in its interior.
\end{defn}

For simplicity, we will denote a circle passing through consecutive
vertices $V_{i-1},$ $V_i$ and $V_{i+1}$ by $C_i.$

\begin{defn}
We call a full or empty circle $C_i$ an extremal circle. We refer to
the corresponding vertex $V_i$ as a globally extremal vertex.
\end{defn}

Some of our results will use triangulation arguments. Consider all
of the empty circles passing through any three distinct points of a
polygon. In \cite{del} B. Delaunay shows that the triangles formed by
each of the three points corresponding to an empty circle form a
triangulation of the polygon $P$. This triangulation is called a
\textit{Delaunay triangulation}.

Analogously, if we assume convexity on our polygon and
consider the full circles passing through any given three
points, the triangles given by each of the three points
corresponding to a full circle also form a triangulation. This
triangulation is commonly known as the \textit{Anti-Delaunay
triangulation}.

\begin{defn}A vertex $V_{i}$ is said to be positive if the left angle with respect to orientation, $\angle V_{i-1}V_{i}V_{i+1}$, is at most $\pi$. Otherwise, it is said to be negative.
\end{defn}

\begin{defn}[Discrete Curvature]Assume that a vertex $V_i$ is positive. We say that the curvature of the vertex $V_{i}$ is greater than the curvature at $V_{i+1}$ ($V_{i}\succ V_{i+1})$ if the vertex $V_{i+1}$ is positive and $V_{i+2}$ lies outside the circle $C_{i}$ or if the vertex $V_{i+1}$ is negative and $V_{i+2}$ lies inside the circle $C_{i}$.

By switching the word ``inside" with the word ``outside" in the
above definition (and vice-versa), we obtain that $V_{i}\prec
V_{i+1}$, or that the curvature at $V_{i}$ is less than the
curvature at $V_{i+1}$. In the case that the vertex $V_{i}$ is
negative, simply switch the word ``greater" with the word ``less",
and the word ``outside" by the word ``inside".
\end{defn}

\begin{defn}A vertex $V_{i}$ of a polygonal line $P$ is locally extremal if

\centerline{$V_{i-1}\prec V_{i} \succ V_{i+1}$ or $V_{i-1}\succ
V_{i} \prec V_{i+1}$.}
\end{defn}

\begin{remark}
If we assume convexity on our polygon and observe the definition of locally extremal vertices closely, we simply are
considering the position of the vertices $V_{i-2}$ and $V_{i+2}$
with respect to the circle $C_{i}.$ Our vertex $V_{i}$ will be
locally extremal if and only if both vertices $V_{i-1}$ and $V_{i+2}$ lie inside or outside the
circle $C_{i}.$
\end{remark}

When defining global extremality, we discussed empty and full
extremal circles. If a circle $C_i$ is empty, then we say that the
corresponding vertex $V_i$ is maximal. If $C_i$ is full, then we
say $V_i$ is minimal. Analogously for locally extremal vertices, we call a
vertex maximal if $V_{i-1}\prec V_{i} \succ V_{i+1}$ and minimal if
$V_{i-1}\succ V_{i} \prec V_{i+1}$.

We denote the number of globally maximal-extremal vertices of a
polygonal curve $P$ by $s_{-}(P)$ and globally minimal-extremal
vertices by $s_{+}(P)$ to be consistent with \cite{bos}.
For locally extremal vertices, we will attribute the
notation $l_{-}(P)$ and $l_{+}(P)$, respectively.

\begin{prop}
\label{prop:maxmin} Let $P$ be a generic convex polygon. Then
$$l_{+}(P)=l_{-}(P).$$
\end{prop}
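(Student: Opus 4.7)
The plan is to convert the definition of local extremality into a combinatorial statement about a cyclic binary word and then use a parity argument. First, I would observe that since $P$ is generic, for each pair of consecutive vertices $V_i, V_{i+1}$ the point $V_{i+2}$ does not lie on the circle $C_i$; moreover, convexity forces every vertex to be positive, so Definition 2.5 applies in its simplest form. Consequently, exactly one of the two relations $V_i \prec V_{i+1}$ or $V_i \succ V_{i+1}$ holds for each $i$.

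Next, I would encode these relations as a cyclic word $\omega_1\omega_2\cdots \omega_n$ over the alphabet $\{+,-\}$, setting $\omega_i = +$ when $V_i \prec V_{i+1}$ and $\omega_i = -$ when $V_i \succ V_{i+1}$. Reading Definition 2.6 in these terms, the vertex $V_i$ is locally maximal iff $(\omega_{i-1}, \omega_i) = (+, -)$ and locally minimal iff $(\omega_{i-1}, \omega_i) = (-, +)$. Thus $l_+(P)$ is the number of cyclic $(+, -)$ adjacencies in $\omega$ and $l_-(P)$ is the number of cyclic $(-, +)$ adjacencies.

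Finally, in any cyclic binary word the number of $(+, -)$ adjacencies equals the number of $(-, +)$ adjacencies, because the maximal runs of equal symbols must alternate between $+$-runs and $-$-runs around the cycle: each transition from a $+$-run to a $-$-run is matched, further along the cycle, by the next transition from that $-$-run back to a $+$-run. (If $\omega$ is constant then both counts are zero.) This immediately yields $l_+(P) = l_-(P)$.

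The main obstacle, such as it is, is simply the bookkeeping in step two: one must unwind Definition 2.6 carefully and verify that $V_{i-1} \prec V_i$ corresponds to $\omega_{i-1} = +$ (rather than to $\omega_i$) without any sign or index error. The convexity hypothesis is used only to guarantee that all vertices are positive, so the more delicate negative-vertex clauses of Definition 2.5 never need to be invoked in the encoding; one could presumably bootstrap this argument to broader classes of generic polygons by handling those clauses, but for the stated proposition the cyclic-word reduction is both sufficient and essentially all there is to it.
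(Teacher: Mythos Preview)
Your argument is correct and is exactly the formalization the paper has in mind when it says the result follows ``by carefully observing the definition of locally extremal vertices'': genericity yields a well-defined cyclic $\pm$-word, and the sign changes in the two directions are equinumerous. One small slip: in the paper's convention $l_{-}$ counts local maxima and $l_{+}$ counts local minima, so your assignment of $l_{+}$ to $(+,-)$ adjacencies and $l_{-}$ to $(-,+)$ adjacencies is swapped, though this of course does not affect the symmetric conclusion.
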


\begin{remark}
The proof of this fact immediately follows by carefully observing the definition of locally extremal vertices. Note
that it was very important for us to include the assumption that our
polygon is generic, since this eliminates the possibility of having
two extremal vertices adjacent to each other. Also, it is easy to
see that the equality $s_{+}(P)=s_{-}(P)$ does not hold. In fact, we
cannot form any relationship between globally maximal-extremal and
globally minimal-extremal vertices.
\end{remark}

\begin{prop}
\label{prop:globaltolocal} Let $P$ be a generic convex polygon. If
$V_i$ is a globally extremal vertex, then $V_{i}$ is a locally
extremal vertex.
\end{prop}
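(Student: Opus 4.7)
The plan is to argue directly from the definitions, exploiting the convenient characterization of local extremality for convex polygons given in the preceding Remark. Recall that Remark states that when $P$ is convex, $V_i$ is locally extremal if and only if the two ``second-neighbor'' vertices $V_{i-2}$ and $V_{i+2}$ lie on the same side of the circle $C_i$ (both inside, or both outside). So it suffices to check, case by case on whether $V_i$ is globally minimal or globally maximal, that $V_{i-2}$ and $V_{i+2}$ end up on the same side of $C_i$.

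First I would handle the maximal case. Suppose $V_i$ is globally maximal, so $C_i$ is empty, i.e., contains no vertex of $P$ other than $V_{i-1},V_i,V_{i+1}$ in its interior. Then neither $V_{i-2}$ nor $V_{i+2}$ lies in the interior of $C_i$. The genericity hypothesis prevents them from lying on $C_i$ either: at most three vertices of $P$ lie on any common circle, and $V_{i-1}, V_i, V_{i+1}$ already exhaust that quota for $C_i$. Therefore both $V_{i-2}$ and $V_{i+2}$ lie strictly outside $C_i$, and the Remark immediately yields that $V_i$ is locally (maximal-)extremal.

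The minimal case is symmetric. If $V_i$ is globally minimal, then $C_i$ is full, so every vertex of $P$ other than $V_{i-1},V_i,V_{i+1}$ lies in the interior of $C_i$. In particular $V_{i-2}$ and $V_{i+2}$ do, and again genericity ensures these positions are strict. Both second-neighbors are thus strictly inside $C_i$, and the Remark gives that $V_i$ is locally (minimal-)extremal.

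There is no real obstacle; the only subtlety worth being explicit about is the appeal to genericity, which is what lets one pass from the nonstrict information given by empty/fullness to the strict inside/outside condition required by the local definition. One might also want to briefly justify the Remark's characterization if it is to stand in for a full local-extremality check, but since the Remark is stated earlier, invoking it suffices.
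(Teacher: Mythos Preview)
Your proposal is correct and follows essentially the same approach as the paper, which simply states that the result ``follows immediately from the observation made in Remark 2.1.'' You have merely unpacked that one-line appeal into its two cases and made the role of genericity explicit; incidentally, your reading of the Remark as concerning $V_{i-2}$ and $V_{i+2}$ (rather than the paper's $V_{i-1}$, which already lies on $C_i$) is the intended one.
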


This result follows immediately from the observation made in Remark
2.1.

\begin{prop}
\label{prop:quad} Let $P$ be a generic convex quadrilateral. Then
$P$ has four globally extremal and locally extremal vertices.
\end{prop}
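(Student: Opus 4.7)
The plan is to show directly that every vertex of a generic convex quadrilateral is globally extremal, and then invoke Proposition \ref{prop:globaltolocal} to upgrade this to local extremality for free.

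Fix a vertex $V_i$ of the quadrilateral $P$ and consider the circle $C_i$ passing through the three consecutive vertices $V_{i-1}$, $V_i$, $V_{i+1}$. Because $P$ has exactly four vertices, the only vertex of $P$ not lying on $C_i$ by construction is $V_{i+2}$. The genericity hypothesis states that at most three vertices of $P$ can lie on a common circle; hence $V_{i+2}\notin C_i$, and $V_{i+2}$ must lie \emph{strictly} inside or \emph{strictly} outside $C_i$.

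In the first case, $C_i$ contains every other vertex of $P$ in its interior (there is only the one to check), so $C_i$ is full; in the second case $C_i$ contains no other vertex in its interior, so $C_i$ is empty. Either way, by Definition 2.3 the vertex $V_i$ is globally extremal. Since $i$ was arbitrary, all four vertices of $P$ are globally extremal. By Proposition \ref{prop:globaltolocal}, each of these four vertices is also locally extremal, which finishes the proof.

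There is essentially no obstacle here: the only thing to be careful about is the role of the genericity assumption, which is precisely what forbids $V_{i+2}$ from lying on $C_i$ and thereby guarantees that $C_i$ is unambiguously either full or empty.
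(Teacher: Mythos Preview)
Your argument is correct. With only four vertices, each circumscribed circle $C_i$ has exactly one remaining vertex to test, and genericity forces that vertex strictly to one side of $C_i$; the dichotomy full/empty is then immediate, and Proposition~\ref{prop:globaltolocal} finishes the job exactly as you say.

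The paper proceeds differently: it applies a Delaunay triangulation to the quadrilateral (a single diagonal), which exhibits two empty circles $C_i$ and hence two globally maximal-extremal vertices, and then an Anti-Delaunay triangulation to exhibit two full circles and hence two globally minimal-extremal vertices; Proposition~\ref{prop:globaltolocal} is then invoked just as in your proof. Your route is more elementary and self-contained, since it avoids the triangulation machinery entirely and treats all four vertices uniformly. The paper's route, on the other hand, separates the maximal and minimal counts (yielding $s_{-}(P)=s_{+}(P)=2$ explicitly) and rehearses in miniature the Delaunay/Anti-Delaunay argument that drives the later results on decomposition, so it fits more organically into the surrounding development.
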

\begin{proof}
For globally extremal vertices, we apply a Delaunay triangulation to
$P$, which immediately yields two globally maximal-extremal vertices. We then apply an Anti-Delaunay triangulation to $P$, which yields
two minimal-extremal vertices. Proposition
\ref{prop:globaltolocal} then yields the result for locally extremal vertices.
\end{proof}

While the following proposition is technical yet quite obvious,
it will be a vital proposition that will be used frequently to prove
our main results.

\begin{prop}
\label{prop:circleprop} Let $A$, $B$, $C$ and $X$ be four points in
the plane in a generic arrangement, $C_B$ be the corresponding
circle passing through $A$, $B$ and $C$, and let $C_A$ be the circle
passing through the points $X$, $A$ and $B$. We denote by
$\widetilde{C}_A$ and $\widetilde{C}_B$ the open discs bounded by
$C_A$ and $C_B$, respectively. Denote by $H^{+}_{AB}$ the half-plane
formed by the infinite line $AB$ containing the point $C$ and by
$H^{-}_{AB}$ the half-plane formed by the infinite line $AB$ not
containing the point $C$. If $X$ lies in $\widetilde{C}_B\bigcap
H^{+}_{AB}$, then $C$ lies in $H^{+}_{AB}\setminus \widetilde{C}_A$.
If $X$ lies in $H^{+}_{AB}\setminus \widetilde{C}_B$, then $C$ lies
in $\widetilde{C}_A$. Analogously, if $X$ lies in
$\widetilde{C}_B\bigcap H^{-}_{AB}$, then $C$ lies in
$\widetilde{C}_A$. If $X$ lies in $H^{-}_{AB}\setminus
\widetilde{C}_B$, then $C$ lies in $H^{+}_{AB}\setminus
\widetilde{C}_A$.
\end{prop}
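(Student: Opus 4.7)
The plan is to reduce all four implications to the inscribed angle theorem applied to the common chord $AB$ of the two circles $C_A$ and $C_B$. Set $\alpha=\angle ACB$ and $\beta=\angle AXB$. Because both circles pass through $A$ and $B$, the position of any fourth point relative to either circle is entirely controlled by how the angle it subtends at $AB$ compares with the relevant inscribed angle, with an adjustment by $\pi-(\cdot)$ when the point lies on the opposite arc.

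First I would record the following dictionary: for any point $Y$ in $H^+_{AB}$, one has $Y\in\widetilde{C}_B$ if and only if $\angle AYB>\alpha$, while for $Y$ in $H^-_{AB}$ the criterion becomes $Y\in\widetilde{C}_B$ if and only if $\angle AYB>\pi-\alpha$. An identical dictionary holds for $C_A$ with $\beta$ in place of $\alpha$. Since $C\in H^+_{AB}$ by the definition of $H^+_{AB}$, in every case I only need to determine whether $C\in\widetilde{C}_A$.

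Dispatching the four cases is then routine. In (i), $X\in\widetilde{C}_B\cap H^+_{AB}$ gives $\beta>\alpha$; since $C$ and $X$ lie on the same side of $AB$, the dictionary applied to $C_A$ yields $C\notin\widetilde{C}_A$. Case (ii) gives $\beta<\alpha$ and hence $C\in\widetilde{C}_A$. For (iii) and (iv) the points $X$ and $C$ lie on opposite sides of $AB$, so $C$ is inside $\widetilde{C}_A$ precisely when $\alpha>\pi-\beta$: case (iii) supplies $\beta>\pi-\alpha$ and yields $C\in\widetilde{C}_A$, while case (iv) supplies $\beta<\pi-\alpha$ and yields $C\notin\widetilde{C}_A$.

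The main obstacle, minor but easy to botch, is the side-of-line bookkeeping: the inscribed angle on a circle through $A$ and $B$ equals $\alpha$ on one arc and $\pi-\alpha$ on the other, so the comparison must toggle between $\beta$ and $\pi-\beta$ according to whether $X$ sits on the same side of $AB$ as $C$ or on the opposite side. Once this is properly accounted for, each of the four assertions collapses to a single inequality.
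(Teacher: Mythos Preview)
Your argument via the inscribed angle theorem is correct: once the dictionary is in place, each of the four cases does collapse to a single strict inequality between $\alpha$ and $\beta$ (or $\pi-\beta$), and genericity rules out equality. The paper, by contrast, does not give a synthetic proof at all; it merely remarks that the proposition is ``a simple verification of the situation restricted around the origin and solving corresponding systems of equations.'' So the intended approach is a coordinate computation: normalize $A$ and $B$, write down the equations of $C_A$ and $C_B$, and check the sign conditions case by case.

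Your route is genuinely different and, I would say, more illuminating. It makes explicit \emph{why} the proposition holds---both circles share the chord $AB$, so containment on either side of that chord is governed entirely by the inscribed angle, and the two circles' roles are interchangeable because the comparison $\alpha\lessgtr\beta$ is symmetric. The paper's coordinate method buys mechanical certainty and requires no geometric lemma, but it hides this symmetry inside algebra. Your only real hazard, as you note, is the $\beta\leftrightarrow\pi-\beta$ toggle when $X$ and $C$ lie on opposite sides of $AB$; you have handled it correctly.
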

\begin{proof}
The proof is a simple verification of the the situation restricted around the origin
and solving corresponding systems of equations.
\end{proof}

\section{Globally Extremal Vertices and Decomposition of Polygons}

\begin{defn}
We say an edge or diagonal of a polygon is Delaunay if there exists an
empty circle passing through the corresponding vertices of that
edge or diagonal. If there exists a full circle passing through the vertices of
this edge or diagonal, then we say the edge or diagonal is Anti-Delaunay.
\end{defn}

\begin{remark}
Note that a triangulation of a polygon where every edge and diagonal
is Delaunay is a Delaunay Triangulation. Similarly, if every edge
and diagonal of a triangulation is Anti-Delaunay, then we have an
Anti-Delaunay triangulation.
\end{remark}

So what exactly does it mean to decompose a polygon? Here the
notion of decomposing a polygon will simply be the cutting of a
polygon $P$ by passing a line segment through any two vertices so
that the line segment lies in the interior of the polygon. We will
call this line segment a \textit{diagonal}. Also, we denote the
two new polygons formed by a decomposition by $P_1$ and $P_2$ and
require that they each have at least four vertices. By this
last requirement it automatically follows that $P$ must have at least
six vertices to successfully perform a decomposition.

\begin{theorem}
\label{theorem:strongglobal} Let $P$ be a generic convex polygon
with six or more vertices and $P_1$ and $P_2$ be the resulting polygons of a decomposition of $P$. Assume that the diagonal of this
decomposition is Delaunay. Then
$$s_{-}(P)\geq s_{-}(P_1)+s_{-}(P_2)-2.$$
Analogously, if the diagonal is Anti-Delaunay, then
$$s_{+}(P)\geq s_{+}(P_1)+s_{+}(P_2)-2.$$
\end{theorem}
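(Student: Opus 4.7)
The plan is to reinterpret $s_{-}$ via Delaunay triangulations and exploit the fact that, under the hypothesis, the decomposing diagonal $e=V_aV_b$ already lies inside the Delaunay triangulation of $P$.

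First I would record the standard correspondence: for a generic convex polygon $Q$, the vertex $V_i$ is globally maximal-extremal if and only if the triangle $V_{i-1}V_iV_{i+1}$ belongs to the (unique) Delaunay triangulation of $Q$. Consequently $s_{-}(Q)$ equals the number of \emph{ears} of that triangulation, meaning triangles two of whose sides are consecutive polygonal edges.

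Next, since $P$ is generic its Delaunay triangulation $T_P$ is unique, and because there exists an empty circle through $V_a$ and $V_b$ the diagonal $e$ lies in $T_P$. An empty circle relative to $P$ is a fortiori empty relative to $P_1$, so the restriction of $T_P$ to the $P_1$-side of $e$ is a triangulation of $P_1$ whose circumcircles are all empty in $P_1$, and by uniqueness it must coincide with $T_{P_1}$; likewise on the $P_2$-side. Thus $T_P=T_{P_1}\cup T_{P_2}$, glued along $e$. For any vertex $V_i$ that is not a diagonal endpoint, its polygon neighbors in $P$ and in the part $P_j$ containing it coincide, so such $V_i$ is an ear of $T_P$ exactly when it is an ear of $T_{P_j}$. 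Moreover, neither $V_a$ nor $V_b$ is an ear of $T_P$, because the candidate ear triangle at $V_a$ would connect its two $P$-neighbors, which lie on opposite sides of $e$, producing an edge that crosses $e$. Subtracting ear counts then gives
\[
s_{-}(P_1)+s_{-}(P_2)-s_{-}(P) \;=\; (\text{ears of }T_{P_1}\text{ at }V_a,V_b) + (\text{ears of }T_{P_2}\text{ at }V_a,V_b).
\]

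The main obstacle is to improve the a priori bound of $4$ on the right-hand side to $2$, by showing that at most one of $V_a,V_b$ can be an ear in $T_{P_1}$, and analogously in $T_{P_2}$. Let $V_A,V_B$ denote the $P$-neighbors of $V_a,V_b$ on the $P_1$-side. An ear at $V_a$ in $T_{P_1}$ requires the circle through $V_a,V_b,V_A$ to be empty of $P_1$-vertices, and an ear at $V_b$ requires emptiness of the circle through $V_a,V_b,V_B$. Genericity forbids $V_a,V_b,V_A,V_B$ from being concyclic, so Proposition~\ref{prop:circleprop}, applied to these two circles with $V_A,V_B$ on the same side of line $V_aV_b$, says that $V_B$ lies outside the first circle if and only if $V_A$ lies inside the second. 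Hence emptiness of the first circle forces $V_A$ inside the second, contradicting its emptiness; the two ears cannot coexist.

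The Anti-Delaunay case follows by the same template with ``empty'' replaced by ``full'' and $T_P$ replaced by the Anti-Delaunay triangulation, which exists by convexity of $P$; the uniqueness-based splitting across $e$ and the concluding application of Proposition~\ref{prop:circleprop} carry over without change.
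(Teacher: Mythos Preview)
Your proof is correct and follows the same Delaunay-triangulation approach as the paper's (very terse) argument, supplying the details the paper leaves implicit: that $e$ lies in $T_P$, that $T_P$ splits as $T_{P_1}\cup T_{P_2}$, and that only the diagonal endpoints can change ear status. One small simplification for your last step: instead of invoking Proposition~\ref{prop:circleprop}, note purely combinatorially that in $T_{P_1}$ the boundary edge $e=V_aV_b$ lies in exactly one triangle $V_aV_bW$, and this triangle is the ear at $V_a$ only if $W=V_A$ and the ear at $V_b$ only if $W=V_B$, so at most one of the two can occur.
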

\begin{proof}
We begin by applying a Delaunay triangulation to $P$, $P_1$ and $P_2$. Noticing that the diagonal is Delaunay for $P_1$ and $P_2$, as well as $P$ by assumption, we obtain our first inequality. For the second inequality we mimic this argument, instead applying an Anti-Delaunay triangulation.
\end{proof}

It turns out that from the above result, we can derive a very nice
geometric corollary. First, we need two small lemmas.

\begin{lemma}
\label{lemma:triangulation} Let $P$ be a convex polygon with seven
or more vertices and let $T(P)$ be a triangulation of $P$. Then,
there exists a diagonal of our triangulation such that, if we apply
a decomposition of $P$ using this diagonal, then both $P_1$ and
$P_2$ have four or more vertices.
\end{lemma}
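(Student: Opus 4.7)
The plan is to use the dual tree of the triangulation. Let $T^{*}$ be the graph whose nodes are the $n-2$ triangles of $T(P)$, with two nodes joined by an edge when the corresponding triangles share a diagonal of $T(P)$; this graph has $n-3$ edges and, since $P$ is convex, is a tree. The degree of a node equals the number of diagonals contained in the associated triangle, so it lies in $\{1,2,3\}$; leaves of $T^{*}$ correspond precisely to \emph{ears}, i.e.\ triangles with two polygon edges and a single diagonal.

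Next I will translate the decomposition condition into a condition on $T^{*}$. If a diagonal $d$ of $T(P)$ splits $P$ into $P_1$ and $P_2$, then the triangles of $T(P)$ lying on the $P_i$-side induce a triangulation of $P_i$; if that side contains $k$ triangles, then $P_i$ has $k+2$ vertices. Hence both $P_i$ have at least four vertices exactly when removing $d$ from $T^{*}$ leaves two components each of size at least $2$, i.e.\ when $d$ is not incident to a leaf of $T^{*}$. Since $T^{*}$ has at least $n-2\geq 5$ nodes, no edge can join two leaves, so the edges incident to leaves are in bijection with the leaves themselves. Thus the number of ``bad'' diagonals equals the number $\ell$ of leaves of $T^{*}$.

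It remains to bound $\ell$. Writing $d_2,d_3$ for the number of degree-$2$ and degree-$3$ nodes, the identities $\ell+d_2+d_3=n-2$ and $\ell+2d_2+3d_3=2(n-3)$ yield $\ell=d_3+2$ and $d_2+2d_3=n-4$. Since $d_2\geq 0$, we get $d_3\leq (n-4)/2$ and hence $\ell\leq n/2$. Therefore the number of good diagonals is at least $(n-3)-\lfloor n/2\rfloor$, which is positive for every $n\geq 7$; in particular it is at least $1$ already in the tight case $n=7$ (where $\ell\leq 3$ but there are $4$ diagonals), giving the desired diagonal.

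I expect no genuine obstacle in this argument; it is a counting argument dressed as a graph-theoretic observation. The only point that requires mild care is the sharpness check at the boundary $n=7$, where one must confirm that $\ell\leq 3$ (not $\ell=4$) so that at least one non-ear diagonal survives. Everything else reduces to the standard dual-tree description of triangulations of convex polygons and a handshake count.
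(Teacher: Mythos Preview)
Your argument is correct. The paper itself does not spell out a proof; it merely asserts that the lemma ``follows immediately by an induction argument on the number of vertices.'' Your route is genuinely different: rather than inducting, you pass to the dual tree $T^{*}$ of the triangulation, identify the ``bad'' diagonals as exactly the edges incident to leaves of $T^{*}$, and then bound the number of leaves by a handshake count using the degree constraint $\deg\le 3$. This buys you more than existence: you obtain the quantitative estimate $\ell\le \lfloor n/2\rfloor$ and hence at least $(n-3)-\lfloor n/2\rfloor\ge 1$ good diagonals for all $n\ge 7$, which an unstructured induction would not immediately yield. The only cosmetic remark is that the claim ``no edge can join two leaves'' already holds in any tree with at least three nodes (two adjacent leaves would form the whole tree), so invoking $n-2\ge 5$ is more than you need; and your boundary check at $n=7$ is indeed the decisive case, since $(n-3)-\lfloor n/2\rfloor$ first becomes positive there.
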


This result is clear, and follows immediately by an induction
argument on the number of vertices.

\begin{remark}
It is obvious that this result does not hold if $n=6$. In fact, it is easy to find a convex polygon whose Delaunay Triangulation does not satisfy Lemma \ref{lemma:triangulation}, hence the need for one more lemma.
\end{remark}

\begin{lemma}
\label{lemma:decompglobal} Let $P$ be a generic convex polygon with
six vertices and let $P_1$ and $P_2$ be the resulting polygons of a
decomposition. Then
$$s_{-}(P)\geq s_{-}(P_1)+s_{-}(P_2)-2$$ and $$s_{+}(P)\geq s_{+}(P_1)+s_{+}(P_2)-2.$$
\end{lemma}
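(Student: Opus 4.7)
The plan is to exploit the rigid combinatorial structure forced by $n = 6$. Since any decomposition produces two pieces each with at least four vertices, and the diagonal endpoints are counted in both pieces, we have $|V(P_1)| + |V(P_2)| = 8$, which forces each of $P_1$ and $P_2$ to be a (generic convex) quadrilateral. Proposition~\ref{prop:quad} then pins down the quadrilateral counts exactly: $s_{-}(P_i) = 2$ and $s_{+}(P_i) = 2$, so $s_{-}(P_1) + s_{-}(P_2) = 4$ and $s_{+}(P_1) + s_{+}(P_2) = 4$. Thus both inequalities reduce to the single lower-bound claim that every generic convex hexagon $P$ satisfies $s_{-}(P) \geq 2$ and $s_{+}(P) \geq 2$.

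To prove $s_{-}(P) \geq 2$, I would apply a Delaunay triangulation to $P$, which partitions it into four triangles. A direct ear count forces at least two of these to be ears of the form $V_{i-1} V_i V_{i+1}$: letting $e$ be the number of ears, $m$ the number of triangles with exactly one boundary edge, and $z$ the number with none, the relations $2e + m = 6$ and $e + m + z = 4$ give $e = z + 2 \geq 2$. Each ear is Delaunay, so the circumscribing circle $C_i$ is empty, making $V_i$ globally maximal-extremal. Two distinct ears supply two distinct such vertices, giving $s_{-}(P) \geq 2$. The same argument run on the Anti-Delaunay triangulation (which exists because $P$ is convex) yields $s_{+}(P) \geq 2$.

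The reason this lemma is not simply a corollary of Theorem~\ref{theorem:strongglobal} is that the decomposition diagonal here is arbitrary and need not be either Delaunay or Anti-Delaunay for $P$. If it is not Delaunay, the Delaunay triangulation of $P$ crosses the diagonal, and one cannot merge the Delaunay triangulations of $P_1$ and $P_2$ into a triangulation of $P$ as in the earlier theorem. The main (modest) obstacle, then, is conceptual rather than computational: one must recognize that the decomposition diagonal can be completely sidestepped, because the low vertex count lets us pin down $s_\pm(P_i)$ exactly via Proposition~\ref{prop:quad}, leaving only a free-standing lower bound on $s_\pm(P)$ to establish — and this lower bound comes from a single triangulation of $P$ alone, with no reference to the decomposition at all.
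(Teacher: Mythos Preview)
Your proof is correct and follows the paper's approach: both observe that $P_1$ and $P_2$ are necessarily generic convex quadrilaterals and invoke Proposition~\ref{prop:quad} to get $s_{\pm}(P_i)=2$, reducing both inequalities to $s_{\pm}(P)\ge 2$. Your explicit ear-count on the Delaunay (resp.\ Anti-Delaunay) triangulation of $P$ to establish this last bound in fact spells out a step that the paper's terse proof leaves implicit.
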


\begin{proof}
Since we have no guarantee that our diagonal is Delaunay, we cannot mimic the proof of Theorem \ref{theorem:strongglobal}. We observe that, since $P$ is generic, $P_1$ and $P_2$ are generic as well. Moreover, $P_1$ and $P_2$ are quadrilaterals. By applying Proposition \ref{prop:quad} to $P_1$ and $P_2$, we prove our assertion.
\end{proof}

\begin{cor}[The Global Four-Vertex Theorem]
\label{cor:newglobal} Let $P$ be a generic convex polygon with six
or more vertices. Then $$s_{+}(P)+s_{-}(P)\geq 4.$$
\end{cor}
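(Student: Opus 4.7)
The plan is to establish the two one-sided inequalities $s_-(P) \geq 2$ and $s_+(P) \geq 2$ separately and then add them. By the symmetry between Delaunay and Anti-Delaunay in the preceding lemmas, it suffices to argue the first; the second is obtained from the same induction with ``Delaunay'' replaced by ``Anti-Delaunay'' throughout and the corresponding halves of Theorem \ref{theorem:strongglobal} and Lemma \ref{lemma:decompglobal} invoked.

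For $s_-(P) \geq 2$ I would proceed by strong induction on $n$, the number of vertices of $P$. In the base case $n = 6$, I would pick any diagonal, which decomposes $P$ into two quadrilaterals $P_1, P_2$; Proposition \ref{prop:quad} gives $s_-(P_i) = 2$, and Lemma \ref{lemma:decompglobal} then yields $s_-(P) \geq 2 + 2 - 2 = 2$. For the inductive step $n \geq 7$, I would build a Delaunay triangulation of $P$ and invoke Lemma \ref{lemma:triangulation} to select one of its diagonals whose decomposition gives pieces $P_1, P_2$ each with at least four vertices. Since this diagonal is an edge of the Delaunay triangulation, it is itself Delaunay, so Theorem \ref{theorem:strongglobal} applies and gives $s_-(P) \geq s_-(P_1) + s_-(P_2) - 2$. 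The induction hypothesis (supplemented by Proposition \ref{prop:quad} whenever a piece is a quadrilateral) then provides $s_-(P_i) \geq 2$ and closes the step.

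The main obstacle is that a piece $P_i$ may turn out to be a pentagon, a case handled by neither Proposition \ref{prop:quad} nor the decomposition framework (a pentagon admits no diagonal splitting it into two polygons each with at least four vertices, so Lemma \ref{lemma:triangulation} and Theorem \ref{theorem:strongglobal} do not apply). I would patch this gap with a direct ear argument for pentagons: in any Delaunay triangulation of a generic convex pentagon, the dual graph is a path on three triangles, so the two endpoint triangles are ears of the form $V_{i-1}V_iV_{i+1}$. The circumscribed circle of such an ear is $C_i$, and it is empty by the defining property of a Delaunay triangle, so $V_i$ is globally maximal-extremal. This produces two such vertices, giving $s_-(P_i) \geq 2$ for pentagonal pieces and closing the induction.

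Running the same induction with Anti-Delaunay triangulations and the second inequality of Theorem \ref{theorem:strongglobal} gives $s_+(P) \geq 2$, and adding the two bounds yields $s_+(P) + s_-(P) \geq 4$.
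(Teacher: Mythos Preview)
Your proof is correct and mirrors the paper's inductive argument: base case $n=6$ via Proposition \ref{prop:quad} and Lemma \ref{lemma:decompglobal}, inductive step via a Delaunay (resp.\ Anti-Delaunay) diagonal supplied by Lemma \ref{lemma:triangulation} and fed into Theorem \ref{theorem:strongglobal}. You actually go a step further than the paper by flagging and patching the pentagon case---the paper applies the inductive hypothesis to the pieces $P_i$ without comment, even though a piece may have only four or five vertices---and your ear argument for pentagons is valid (indeed, the same dual-tree-leaf argument works for every $n\geq 4$ and would yield $s_-\geq 2$ directly, without any induction).
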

\begin{proof}
We will prove the result by induction on the number of
vertices of $P$. We first consider the base case $n=6$, noticing if we apply a decomposition to
$P$, then $P_1$ and $P_2$ are both quadrilaterals. By
Proposition \ref{prop:quad}, we obtain that $P_1$ and $P_2$ each have four
globally extremal vertices. It follows from Lemma
\ref{lemma:decompglobal} that $P$ has four globally extremal
vertices.

We now consider the case where $n\geq7$. We begin by applying a
Delaunay triangulation to $P$. By Lemma \ref{lemma:triangulation},
it follows that there exists a diagonal $d$ such that when we
decompose $P$ by this diagonal, $P_1$ and $P_2$ each have four or
more vertices. Since our diagonal corresponds to a Delaunay triangulation, it follows
that $d$ is Delaunay. Since $P_1$ and $P_2$
have less vertices than $P$, we apply the inductive assumption to obtain
$s_{-}(P_1)\geq 2$ and $s_{-}(P_2)\geq 2$. Applying this to Theorem
\ref{theorem:strongglobal}, we obtain $s_{-}(P)\geq 2$. An analogous
argument using an Anti-Delaunay triangulation and Theorem
\ref{theorem:strongglobal} yields $s_{+}(P)\geq 2$. So $s_{+}(P)+s_{-}(P)\geq 4$, proving the assertion.
\end{proof}

\section{Locally Extremal Vertices and Decomposition of Polygons}

When considering locally extremal vertices, it is easy to see that
the only vertices affected by a decomposition of a
polygon will be the vertices on the diagonal of decomposition and the
neighboring vertices.

\begin{figure}[H]
\centerline{\includegraphics[scale=0.175]{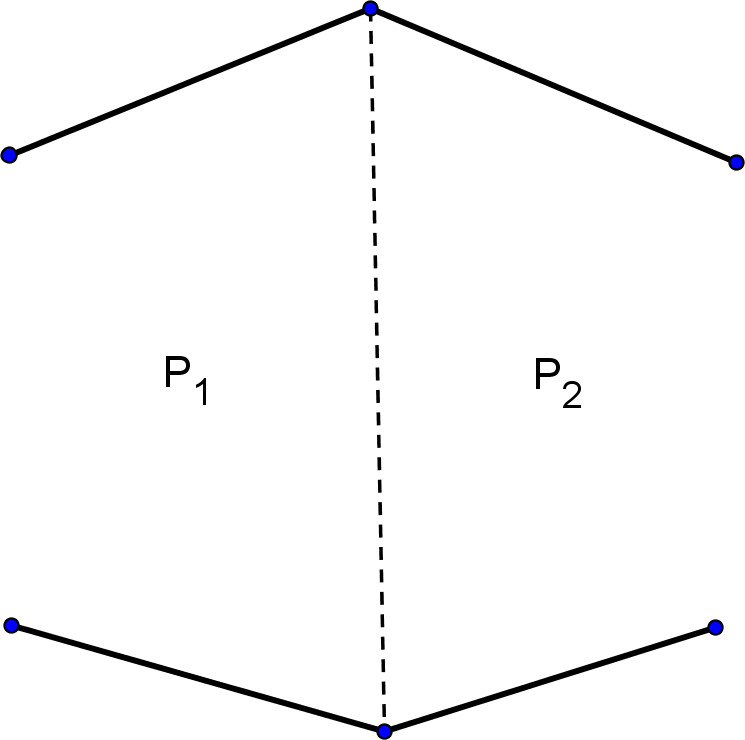}}
\caption[Proving Theorem \ref{thm:localineq}]{}
\end{figure}

This means that we have a total of six vertices impacted by a decomposition, leading us to a feasible case-by-case analysis. Before proving our main result, we need a few lemmas.

\begin{lemma}
\label{lemma:localdec1} Let $P$ be a generic convex polygon and $P_1$ and $P_2$ the resulting polygons of a decomposition. Denote the vertices of the diagonal by $B$ and $D$, the neighboring vertex of $B$ in $P_1$ by A, and the neighboring vertex of $B$ in $P_2$ by $C$. Assume that $A$ is locally
maximal-extremal in $P_1$ but not in $P$, and that $C$ is locally
maximal-extremal for $P_2$ but not in $P$. Then, $B$ is a locally
maximal-extremal vertex for $P$.
\end{lemma}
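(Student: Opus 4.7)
The plan is to translate the hypotheses via Remark 2.1 (which says that in a convex polygon $V_i$ is locally maximal-extremal iff both $V_{i-2}$ and $V_{i+2}$ lie outside the circle $C_i$) and then to use Proposition 2.1 to shuttle the resulting information onto the right circle. I would let $A'$ denote the neighbor of $A$ in $P_1$ other than $B$, and $C'$ the neighbor of $C$ in $P_2$ other than $B$, so that the cyclic order of $P$ near the diagonal reads $\ldots,A',A,B,C,C',\ldots$. Write $C_A$, $C_B$, $C_C$ for the circumscribed circles at $A$, $B$, $C$ in $P$ (that is, the circles through each vertex and its two immediate neighbors); observe that $C_A$ is also the corresponding circle at $A$ in $P_1$, since the decomposition does not alter $A$'s immediate neighbors, and similarly $C_C$ matches the corresponding circle at $C$ in $P_2$.

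Next I would unpack the hypotheses. Because $A$ is locally maximal-extremal in $P_1$, both second-neighbors of $A$ in $P_1$ lie outside $C_A$; one of these, namely the $A'$-side second-neighbor (call it $Q$), is also the $A'$-side second-neighbor of $A$ in $P$, while the other is $D$. Because $A$ is not locally extremal in $P$, its two second-neighbors in $P$, namely $Q$ and $C$, must sit on opposite sides of $C_A$, and since $Q$ lies outside this forces $C \in \widetilde{C}_A$. The completely symmetric argument at $C$ gives $A \in \widetilde{C}_C$. By Remark 2.1, to conclude that $B$ is locally maximal-extremal in $P$ I must verify that its two second-neighbors in $P$, namely $A'$ and $C'$, both lie outside $C_B$.

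For the $A'$-side I would invoke Proposition 2.1 on the four points $A',A,B,C$, taking the roles $A\mapsto A$, $B\mapsto B$, $C\mapsto C$, $X\mapsto A'$, so that the Proposition's $C_A$ and $C_B$ coincide with mine. By convexity of $P$ the points $A'$ and $C$ lie on the same side of the line $AB$, hence $A'\in H^+_{AB}$. Combined with $C\in\widetilde{C}_A$ (so $C\notin H^+_{AB}\setminus\widetilde{C}_A$), the contrapositive of the first case of Proposition 2.1 forces $A'\notin\widetilde{C}_B$, i.e.\ $A'$ lies outside $C_B$. A symmetric application to the four points $C',C,B,A$ using $A\in\widetilde{C}_C$ yields $C'$ outside $C_B$, completing the argument.

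The main obstacle I foresee is purely bookkeeping: ensuring that the ``$A'$-side second-neighbor'' of $A$ really is the same vertex in $P$ and in $P_1$ (even in the edge case where $P_1$ is a quadrilateral, so that this vertex coincides with $D$), and carefully matching the hypothesis $C\in\widetilde{C}_A$ with the correct half-plane case of Proposition 2.1 so that the contrapositive delivers $A'$ outside $C_B$ rather than some weaker containment.
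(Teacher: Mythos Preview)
Your proof is correct and follows essentially the same route as the paper: from the hypotheses you extract $C\in\widetilde{C}_A$ and $A\in\widetilde{C}_C$, then apply Proposition~\ref{prop:circleprop} across the chords $AB$ and $BC$ to place both second-neighbors of $B$ (your $A',C'$; the paper's $X,Y$) outside $C_B$. One small wording slip: the hypothesis is that $A$ is not locally \emph{maximal}-extremal in $P$, not that $A$ is not locally extremal; your conclusion $C\in\widetilde{C}_A$ still follows, since $Q$ outside $C_A$ already rules out the minimal case.
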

\begin{proof}
Let $X$ be the neighbor of $A$ in $P_1$ and $Y$ be the neighbor of
$C$ in $P_2$. Denote the circle passing through vertices $A$, $B$
and $C$ by $C_B$, the circle passing through vertices $X$, $A$ and
$B$ by $C_A$, and the circle passing through vertices $B$, $C$ and
$Y$ by $C_C$. Since $A$ is not maximal-extremal in $P$, it follows
that $A$ lies inside the circle $C_C$. By Proposition
\ref{prop:circleprop}, it follows that $Y$ lies outside of the
circle $C_B$. Since $C$ is not maximal-extremal in $P$, it follows
that $C$ lies inside the circle $C_A$. By Proposition
\ref{prop:circleprop}, it follows that $X$ lies outside of the
circle $C_B$. The following figure illustrates the situation:

\begin{figure}[H]
\centerline{\includegraphics[scale=0.55]{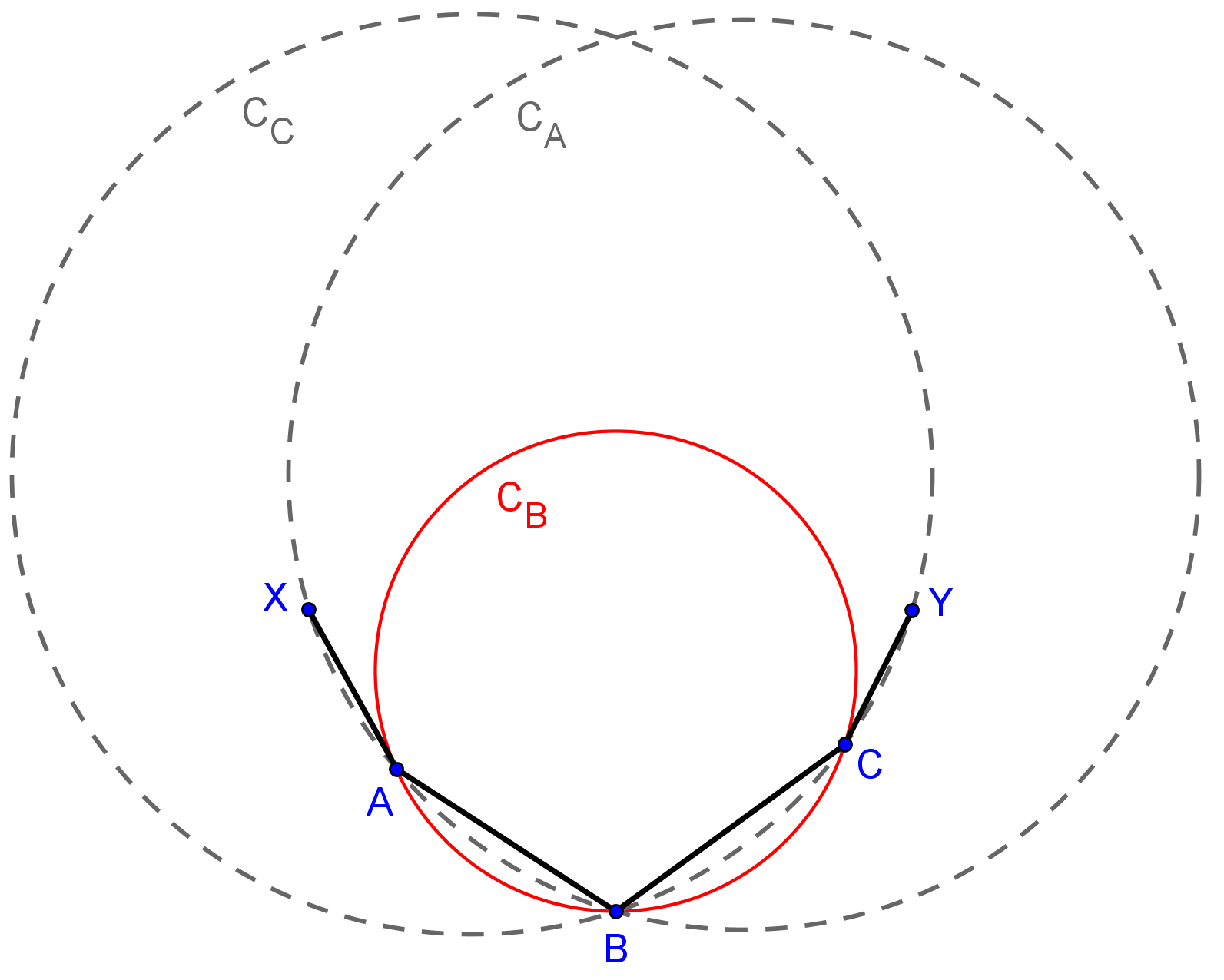}}
\caption[Proving Lemma \ref{lemma:localdec1}]{}
\end{figure}

Since both $X$ and $Y$ lie outside of the circle
$C_B$, $B$ is maximal-extremal in $P$.
\end{proof}

\begin{lemma}
\label{lemma:localdec2} Let $P$ be a generic convex polygon and $P_1$ and $P_2$ the resulting polygons of a decomposition. Denote the vertices of the diagonal by $B$ and $D$, the neighboring vertex of $B$ in $P_1$ by A, and the neighboring vertex of $B$ in $P_2$ by $C$. Assume that $A$ is locally
maximal-extremal in $P_1$ but not in $P$, and that $B$ is locally
maximal-extremal in $P_2$. Then, $B$ is locally maximal-extremal in
$P$.
\end{lemma}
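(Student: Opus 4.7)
The plan is to extend the strategy of Lemma \ref{lemma:localdec1} by chaining two applications of Proposition \ref{prop:circleprop}, transferring information first across the chord $AB$ and then across $BC$. Let $X$ denote the neighbor of $A$ in $P_1$ other than $B$, $Y$ the neighbor of $C$ in $P_2$ other than $B$, and write $C_A = C_{XAB}$, $C_B = C_{ABC}$, and $C_B' = C_{DBC}$ for the pertinent circles. By the convexity of $P$, every vertex other than $A,B$ lies in the open interior half-plane $H^{+}_{AB}$, and every vertex other than $B,C$ lies in $H^{+}_{BC}$. Extracting from the hypotheses as in Lemma \ref{lemma:localdec1}, I would record three key relations: from $A$ being maximal-extremal in $P_1$, the vertex $D$ lies in $H^{+}_{AB}\setminus\widetilde{C}_A$; from $A$ failing to be maximal-extremal in $P$ (its other second neighbor already being outside $C_A$), the vertex $C$ lies in $\widetilde{C}_A$; and from $B$ being maximal-extremal in $P_2$, the vertex $Y$ lies in $H^{+}_{BC}\setminus\widetilde{C}_B'$. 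The goal, by Remark 2.1, is to show that $X$ and $Y$ both lie outside $C_B$.

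The first move, by Proposition \ref{prop:circleprop} applied to $A,B,C,X$ on the chord $AB$, uses $C\in\widetilde{C}_A$ to conclude $X\in H^{+}_{AB}\setminus\widetilde{C}_B$. The same proposition applied to $A,B,D,X$ on the chord $AB$, using $D\in H^{+}_{AB}\setminus\widetilde{C}_A$, gives $X\in\widetilde{C}_{ABD}$. Together these two relations say, via the ``nested caps'' principle that two non-crossing arcs through $A$ and $B$ in $H^{+}_{AB}$ must be nested, that the caps over $AB$ satisfy $\mathrm{cap}(C_B)\subset\mathrm{cap}(C_A)\subset\mathrm{cap}(C_{ABD})$, so $D$, lying on the outer arc, must sit outside $C_B$.

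I would then transport the information across the chord $BC$: applying Proposition \ref{prop:circleprop} to $A,B,C,D$ on the chord $BC$ (both $A$ and $D$ lying in $H^{+}_{BC}$), the relation $D\notin\widetilde{C}_B$ forces $A\in\widetilde{C}_B'$. Repeating the nested-caps observation on the chord $BC$ yields $\mathrm{cap}(C_B)\subset\mathrm{cap}(C_B')$ in $H^{+}_{BC}$, and since the outsider $Y$ of $C_B'$ already lies in $H^{+}_{BC}$, it is automatically an outsider of $C_B$. Having both $X$ and $Y$ outside $C_B$, Remark 2.1 concludes that $B$ is locally maximal-extremal in $P$.

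The main obstacle is the step that promotes $D\in H^{+}_{AB}\setminus\widetilde{C}_A$ to $D\notin\widetilde{C}_B$: a single application of Proposition \ref{prop:circleprop} does not suffice, and one has to lean on the nested-caps observation (equivalently, a double application of the proposition through the auxiliary circle $C_{ABD}$). Once $D$ is known to lie outside $C_B$, the transport across $BC$ is a direct mirror of the first move, and the bookkeeping of which vertices lie in which half-plane is handled uniformly by convexity.
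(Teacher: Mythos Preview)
Your proof is correct and follows essentially the same architecture as the paper's: extract the three key incidences, show $X\notin\widetilde{C}_B$ directly via Proposition~\ref{prop:circleprop}, then establish that the cap of $C_B$ over $BC$ sits inside that of $C_B'$ so that $Y\notin\widetilde{C}_B'$ forces $Y\notin\widetilde{C}_B$. The only difference is in how the intermediate fact $A\in\widetilde{C}_B'$ (equivalently $D\notin\widetilde{C}_B$) is obtained: the paper introduces the second intersection point $Z$ of $C_A$ and $C_B'$ and applies Proposition~\ref{prop:circleprop} on the chord $BZ$, whereas you route through the auxiliary circle $C_{ABD}$ and a nested-caps argument over $AB$; both devices are valid and accomplish the same step.
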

\begin{proof}
For simplicity, consider the following figure, which will illustrate our configuration of points and circles:

\begin{figure}[H]
\centerline{\includegraphics[scale=0.65]{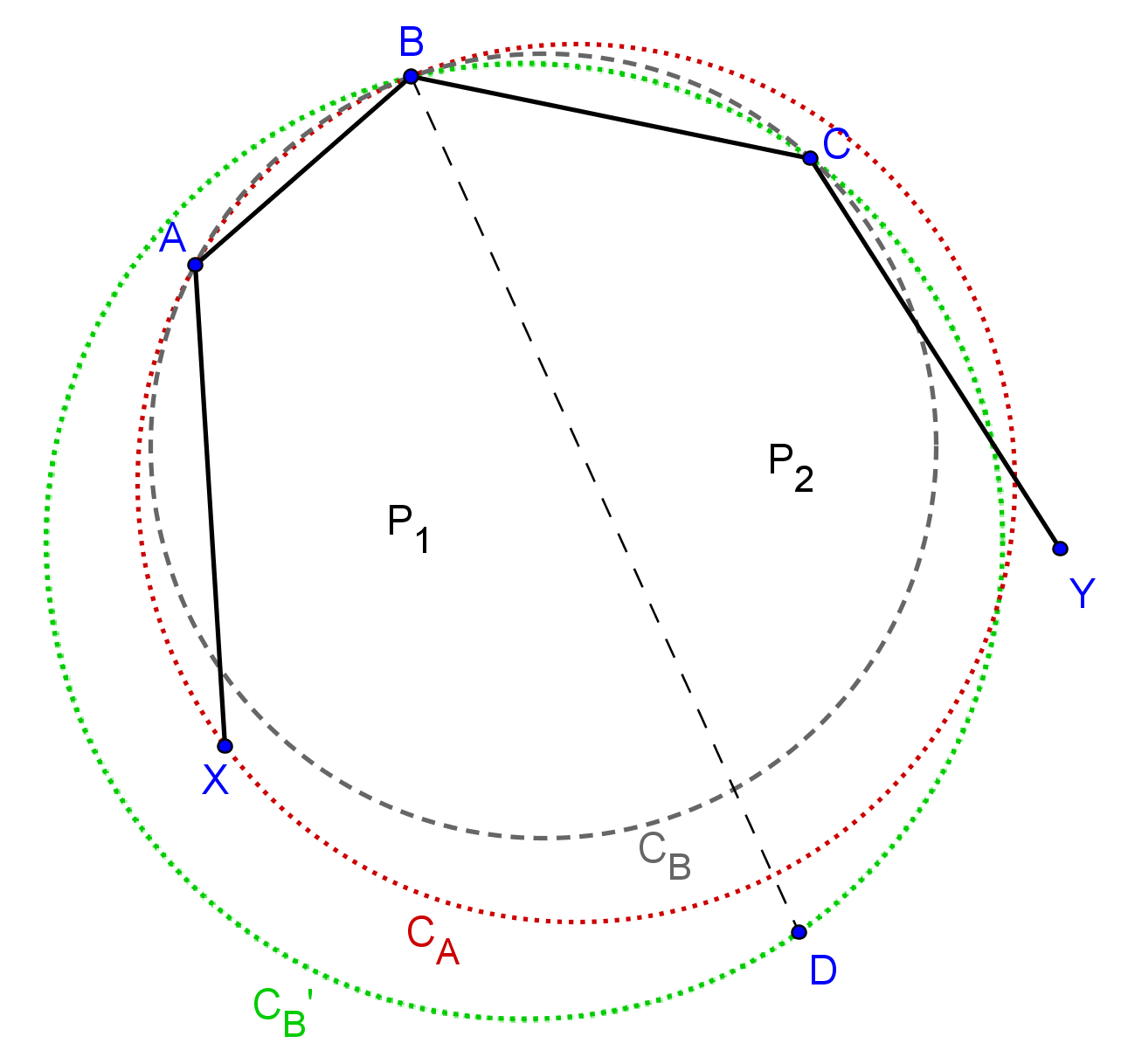}}
\caption[Proving Lemma \ref{lemma:localdec2}]{}
\end{figure}

Let $X$ be the neighbor of $A$ in $P_1$ and $Y$ be the neighbor of $C$ in $P_2$. Denote
by $C_A$ the circle passing through vertices $X$, $A$, and $B$.
Since $A$ is maximal-extremal in $P_1$, it follows that $D$ lies
outside of the circle $C_A$. Since $A$ is not maximal-extremal in
$P$, it follows that $C$ must lie inside the circle $C_A$. Now,
denote the circle passing through vertices $A$, $B$, and $C$ by
$C_B$. Our goal is to show that vertices $X$ and $Y$ lie outside of
the circle $C_B$.

A quick application of Proposition \ref{prop:circleprop} to points
$X$, $C$, $A$ and $B$ yields that $X$ lies outside of $C_B$, so we
need to show that $Y$ lies outside of the circle $C_B$. Denote by
$C_B'$ the circle passing through the points $C$, $B$ and $D$. We
will show that if $Y$ lies outside of $C_B'$, then it lies outside
of $C_B$. To do this, we first must show that $A$ lies inside the
circle $C_B'$.

Consider the circles $C_A$ and $C_B'$. These circles intersect at
two points, point $B$ and some other point, say $Z$. Since $D$ lies
outside of the circle $C_A$, it follows by an application of
Proposition \ref{prop:circleprop} to points $A$, $D$, $B$ and $Z$
that $A$ lies inside the circle $C_B'$.

Lastly, consider the circles $C_B$ and $C_B'$. These two circles
intersect at the points $B$ and $C$. Since $A$ lies inside the
circle $C_B'$, it follows from applying Proposition
\ref{prop:circleprop} to points $A$, $D$, $B$ and $C$ that $D$ lies
outside of the circle $C_B$.

Now, since $B$ is maximal-extremal in $P_2$, it follows that $Y$
lies outside of $C_B'$. By our above observation, it follows
immediately from Proposition \ref{prop:circleprop} that $Y$ lies
outside of $C_B$. Since points $X$ and $Y$ both lie outside of the
circle $C_B$, it follows that $B$ is maximal-extremal in $P$.
\end{proof}

\begin{lemma}
\label{lemma:localdec3} Let $P$ be a generic convex polygon and $P_1$ and $P_2$ the resulting polygons of a decomposition. Denote the vertices of the diagonal by $B$ and $D$, the neighboring vertex of $B$ in $P_1$ by A, and the neighboring vertex of $B$ in $P_2$ by $C$. Assume that $A$ is locally
maximal-extremal for $P_1$ and $D$ is locally maximal-extremal for
both $P_1$ and $P_2$, but not for $P$. Then $A$ is locally
maximal-extremal for $P$.
\end{lemma}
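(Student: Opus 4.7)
The plan is to mirror the strategy of Lemma~\ref{lemma:localdec2}: reduce to a single inclusion, introduce an intermediate circle, and chain applications of Proposition~\ref{prop:circleprop}. Let $X$ be the neighbor of $A$ in $P_1$ other than $B$, and let $Y$ be the neighbor of $D$ in $P_2$ other than $B$. By the convex characterization of local extremality noted in Remark~2.1, $A$ is locally maximal-extremal in $P$ iff both of its polygon-distance-$2$ neighbors in $P$ lie outside the circle $C_A$ through $X$, $A$, $B$. The hypothesis that $A$ is maximal-extremal in $P_1$ places the $P_1$-side such neighbor outside $C_A$, so only the vertex $C$ (the neighbor of $B$ in $P_2$) remains to be checked.

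I would introduce the intermediate circle $C_A'$ through $X$, $A$, $D$, sharing the chord $XA$ with $C_A$. Applying Proposition~\ref{prop:circleprop} to the four points $X$, $A$, $B$, $D$, with $B$ and $D$ on the same side of line $XA$ by convexity, transfers the hypothesis $D$ outside $C_A$ into $B$ inside $C_A'$. On the side of $XA$ containing the remaining vertices of $P$, arc nesting (justified by a further application of Proposition~\ref{prop:circleprop}) then forces the arc of $C_A$ to lie inside that of $C_A'$; so it suffices to show that $C$ lies outside $C_A'$.

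Suppose for contradiction that $C$ is inside $C_A'$. A further application of Proposition~\ref{prop:circleprop} to $C_A'$ and the circle through $X$, $C$, $D$ (sharing chord $XD$, with $A$ and $C$ on the same side of $XD$) gives $A$ outside the circle through $X$, $C$, $D$. Now case-analyze on the hypothesis that $D$ is not maximal-extremal in $P$: at least one of the two polygon-distance-$2$ neighbors of $D$ in $P$ lies inside the circle $C_D$ through $W_1$, $D$, $Y$, where $W_1$ is $D$'s neighbor in $P_1$ other than $B$. In the case that the $P_1$-side such neighbor (which is $A$ in the hexagonal case) lies inside $C_D$, Proposition~\ref{prop:circleprop} applied to $C_D$ and the circle through $X$, $C$, $D$ (sharing $XD$), combined with $A$ outside the circle through $X$, $C$, $D$, forces $C$ inside $C_D$ as well; in the other case $C$ inside $C_D$ is immediate. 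Combining $C$ inside $C_D$ with the given $C$ outside the circle $C_D^2$ through $Y$, $D$, $B$ (from $D$ maximal-extremal in $P_2$), Proposition~\ref{prop:circleprop} on circles $C_D$ and $C_D^2$ (sharing chord $YD$, with $X$ and $B$ on the same side) forces $B$ inside $C_D$. The contradiction then comes from combining $B$ inside $C_A'$ and $B$ inside $C_D$ (two circles sharing chord $XD$) with the hypothesis $A$ outside the circle $C_D^1$ through $X$, $D$, $B$ (from $D$ maximal-extremal in $P_1$), through a final application of Proposition~\ref{prop:circleprop}. The main obstacle will be the careful bookkeeping in this final step, where tracking which side of each relevant chord each vertex lies on, and which arc of each circle bounds the correct region, becomes technical.
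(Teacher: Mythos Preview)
Your case analysis after introducing $C_D$ silently collapses to the hexagonal situation. You assert that $C_D$ (the circle through $W_1$, $D$, $Y$) and the circle through $X$, $C$, $D$ ``share chord $XD$'', but $C_D$ passes through $X$ only when $X=W_1$, i.e.\ when $P_1$ is a quadrilateral. Likewise, ``in the other case $C$ inside $C_D$ is immediate'' identifies the $P_2$-side distance-$2$ neighbor of $D$ in $P$ with $C$, which holds only when $P_2$ is a quadrilateral. In general the distance-$2$ neighbors of $D$ in $P$ are new vertices $E'$ (the neighbor of $W_1$) and $F'$ (the neighbor of $Y$), neither of which need equal $A$ or $C$; your circle through $X$, $C$, $D$ then shares only the single point $D$ with $C_D$, so Proposition~\ref{prop:circleprop} cannot be invoked there. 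The same conflation of $X$ with $W_1$ recurs when you describe $C_D^1$ as the circle ``through $X$, $D$, $B$'': the circumcircle witnessing that $D$ is maximal-extremal in $P_1$ is through $W_1$, $D$, $B$. So as written, the argument proves the lemma only for hexagons.

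The paper sidesteps this by anchoring its intermediate circles at the diagonal rather than at $X$: it works with $C_{D1}$ (through $B$, $D$, $W_1$) and $C_{D2}$ (through $B$, $D$, $Y$), which share the chord $BD$ regardless of the sizes of $P_1$ and $P_2$. The hypothesis that $D$ is not maximal-extremal in $P$ is spent, via a short contradiction argument involving $E'$ and $F'$, on showing that $W_1$ lies outside $C_{D2}$ and $Y$ outside $C_{D1}$; this fixes the relative position of $C_{D1}$ and $C_{D2}$ so that $C$ outside $C_{D2}$ (from $D$ maximal-extremal in $P_2$) transfers to $C$ outside $C_{D1}$, and thence to $C$ outside $C_A$. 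Your first reduction via $C_A'$ is legitimate, but to complete the chain beyond the hexagonal case you will still need to pass through circles that are anchored at both diagonal vertices $B$ and $D$, not at $X$.
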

\begin{proof}
Let $X$ be the neighbor of $A$ in $P_1$, $E$ be the neighbor of $D$
in $P_1$, and $F$ be the neighbor of $D$ in $P_2$. Denote by
$C_{D1}$ the circle passing through vertices $B$, $D$ and $E$, by
$C_{D2}$ the circle passing through vertices $B$, $E$ and $F$, and
by $C_A$ the circle passing through vertices $X$, $A$ and $B$. The following figure illustrates our configuration:

\begin{figure}[H]
\centerline{\includegraphics[scale=0.95]{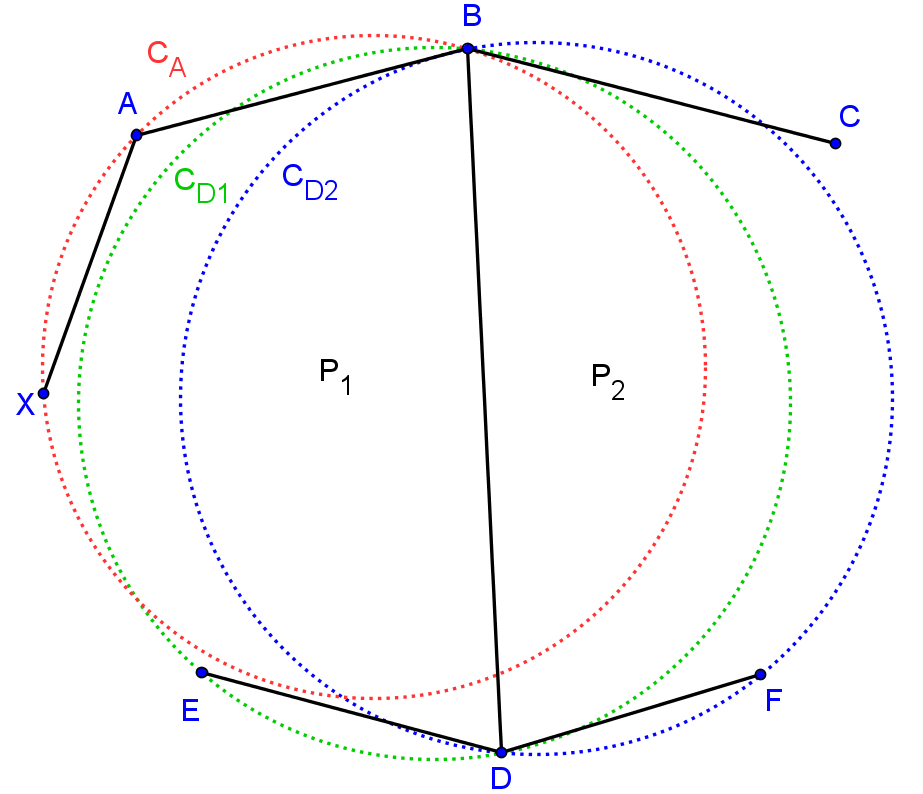}}
\caption[Proving Lemma \ref{lemma:localdec3} (1)]{}
\end{figure}

Our goal is to show that vertex $C$ lies outside of the circle
$C_A$. We will do this by showing that if $C$ lies outside the
circle $C_{D2}$, then it also lies outside of circle $C_A$. Since
$A$ is maximal-extremal in $P_1$, it follows that $D$ lies outside
of $C_A$. Since $D$ is maximal-extremal in $P_1$, it follows that
$A$ lies outside of circle $C_{D1}$. By a similar argument used in the
previous lemma, it follows that if $C$ lies outside of $C_{D1}$ then
it lies outside of $C_A$. The following figure illustrates this situation:

\begin{figure}[H]
\centerline{\includegraphics[scale=1.0]{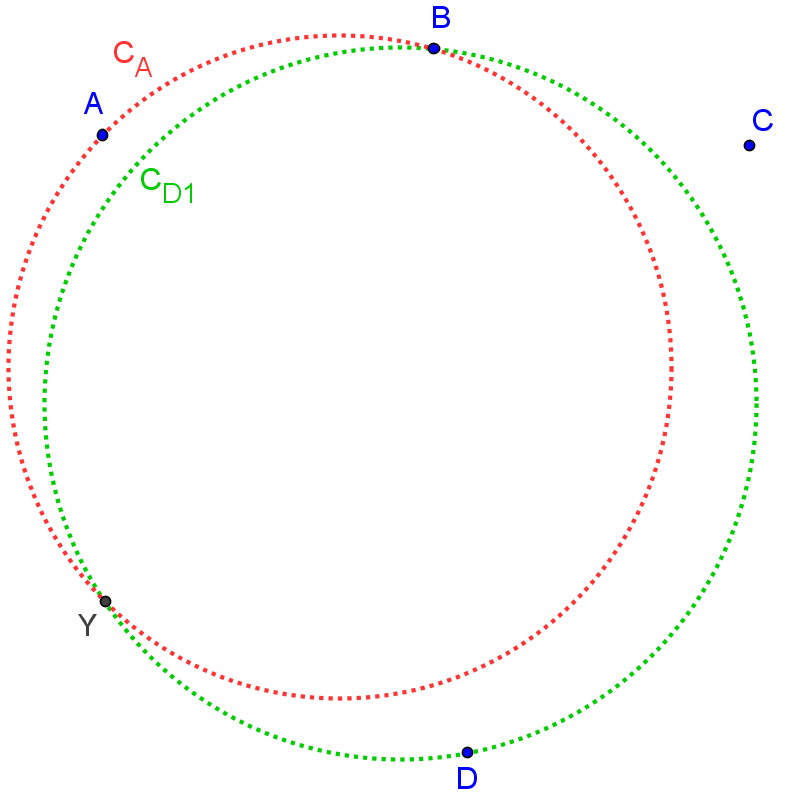}}
\caption[Proving Lemma \ref{lemma:localdec3} (2)]{}
\end{figure}

It remains to show that $C$ lies outside
of $C_{D1}$. Consider the circles $C_{D1}$ and $C_{D2}$. Since $D$ is
maximal-extremal in $P_2$, it follows that $C$ lies outside of the
circle $C_{D2}$. If we show that $C$ also lies outside of $C_{D1}$,
then we are done. To do this, we will heavily use the fact that $D$
is not maximal-extremal in $P$. We will show that if $E$ lies inside
the circle $C_{D2}$ or if $F$ lies in $C_{D1}$, then $D$ is
maximal-extremal in $P$, contradicting our assumption.

It is enough just to check this for $E$. Denote the circle passing
through vertices $E$, $D$ and $F$ by $C_D$. If $E$ lies inside the
circle $C_{D2}$, then applying Proposition \ref{prop:circleprop} to
points $E$, $D$, $F$ and $B$ yields that $B$ lies outside of the
circle $C_D$. Similarly, it follows by Proposition
\ref{prop:circleprop} that $F$ lies inside the circle $C_{D1}$.

Now denote by $E'$ the neighbor of $E$ and by $F'$ the neighbor of
$F$. The following figure illustrates the situation:

\begin{figure}[H]
\centerline{\includegraphics[scale=1.1]{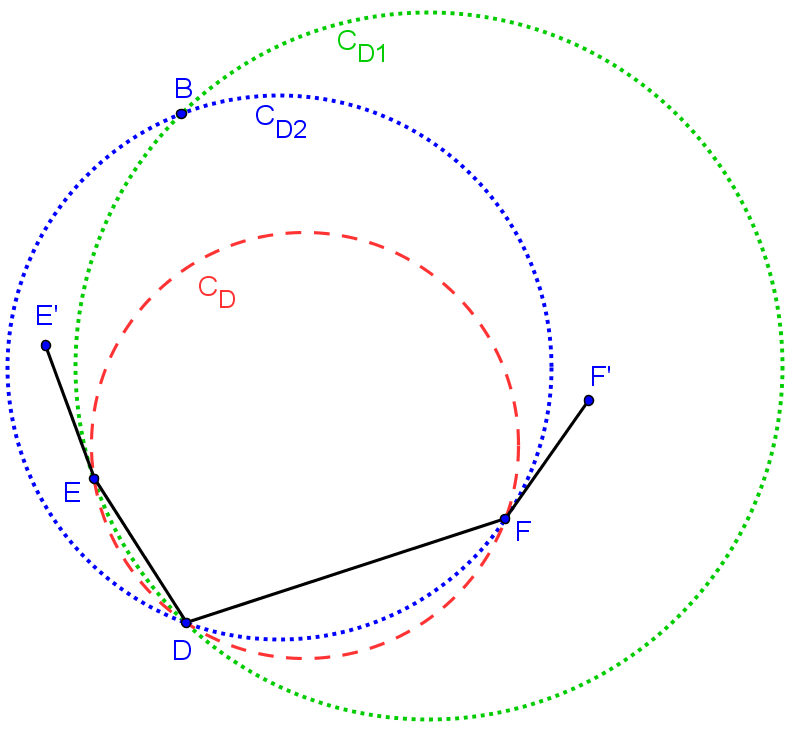}}
\caption[Proving Lemma \ref{lemma:localdec3} (3)]{}
\end{figure}

Since $D$ is maximal-extremal in $P_1$, it follows that $E'$
lies outside of the circle $C_{D1}$. Similarly, since $D$ is
maximal-extremal in $P_2$, it follows that $F'$ lies outside the
circle $C_{D2}$. Now, recall that $B$ lies outside of the circle
$C_D$. Proposition \ref{prop:circleprop} applied to points $E'$,
$B$, $D$ and $D$ tells us that $E'$ lies outside of $C_D$. A similar
argument yields that $F'$ also lies outside of $C_D$. So, we obtain
that $D$ is maximal-extremal in $P$, a contradiction.

So now we know that $E$ must lie outside of the circle $C_{D2}$.
Proposition \ref{prop:circleprop} applied to points $F$, $E$, $B$
and $D$ now tells us that $F$ lies outside of the circle $C_{D1}$.
So, if $C$ were to lie outside of circle $C_{D2}$, then it would
also lie outside of the circle $C_{D1}$. But earlier we showed that
if $C$ would lie outside of circle $C_{D1}$, then $C$ would lie
outside of the circle $C_A$. Indeed, by assumption, $C$ lies outside
of $C_{D2}$ and hence outside of $C_A$. Since $A$ is
maximal-extremal in $P_1$, it also follows that $X$ lies outside of
the circle $C_A$. Therefore $A$ is maximal-extremal in $P$.
\end{proof}

\begin{theorem}
\label{thm:localineq} Let $P$ be a generic convex polygon with at
least 6 vertices and let $P_1$ and $P_2$ be the resulting polygons
of a decomposition. Then
$$l_{-}(P)\geq l_{-}(P_1)+l_{-}(P_2)-2.$$
\end{theorem}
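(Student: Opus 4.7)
The plan is to reduce the inequality to a case analysis on the six vertices affected by the decomposition. As the paper notes just before the lemmas, only the diagonal endpoints $B, D$ and their four immediate neighbors---$A$ (neighbor of $B$ in $P_1$), $C$ (neighbor of $B$ in $P_2$), $E$ (neighbor of $D$ in $P_1$), and $F$ (neighbor of $D$ in $P_2$)---can change their locally-extremal status between $P$ and the pair $(P_1, P_2)$. Every other vertex contributes equally to both sides, so the theorem reduces to showing that, restricted to these six vertices, the number of maximal-extremal vertices in $P_1 \cup P_2$ (where $B$ and $D$ are each counted once per sub-polygon containing them) exceeds the number in $P$ by at most two.

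For each neighbor $V \in \{A, C, E, F\}$ the contribution to the gap $l_{-}(P_1)+l_{-}(P_2)-l_{-}(P)$ lies in $\{-1,0,1\}$, while for each diagonal endpoint $V \in \{B, D\}$ it lies in $\{-1,0,1,2\}$ since it appears in both sub-polygons but only once in $P$. I would organize the argument by the values of these ``diagonal gains'' at $B$ and $D$, invoking Lemmas \ref{lemma:localdec1}, \ref{lemma:localdec2}, \ref{lemma:localdec3} together with their symmetric analogs under the reflections $B \leftrightarrow D$, $P_1 \leftrightarrow P_2$, $A \leftrightarrow E$, $C \leftrightarrow F$; each of these analogs follows by the same Proposition \ref{prop:circleprop} arguments used in the original lemmas.

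The three lemmas then interlock as follows. Lemma \ref{lemma:localdec1} forces $B$ to be maximal-extremal in $P$ whenever both of its neighbor gains at $A$ and $C$ are $+1$, absorbing part of the apparent gain. Lemma \ref{lemma:localdec2} caps the diagonal gain at $B$ at $+1$ whenever any of its adjacent neighbor gains is $+1$. Lemma \ref{lemma:localdec3} shows that a full $+2$ gain at $D$ forces the neighbor gain at $A$ to be non-positive, and symmetrically for $C, E, F$. Summing these bounds across the finitely many combinations of diagonal-gain values confines the total gap to at most $2$.

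The main obstacle, I expect, is the borderline case in which both $B$ and $D$ realize a $+2$ diagonal gain simultaneously. There the symmetric applications of Lemma \ref{lemma:localdec3} pin all four neighbor gains $A, C, E, F$ to be non-positive, but a nominal total of $4$ still remains. Closing this residual gap requires either a direct geometric contradiction---showing that the extremal circles witnessing the $+2$ gains at $B$ and at $D$ cannot coexist in a convex polygon---or a further circle-inclusion argument in the spirit of the proof of Lemma \ref{lemma:localdec3}. Once this configuration is handled, the inequality $l_{-}(P) \geq l_{-}(P_1)+l_{-}(P_2)-2$ follows.
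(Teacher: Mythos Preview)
Your overall strategy---reducing to the six affected vertices and invoking Lemmas \ref{lemma:localdec1}--\ref{lemma:localdec3} together with their symmetric counterparts---is exactly the paper's approach. However, the ``main obstacle'' you isolate is a phantom, and seeing why is what makes the case analysis close.

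The observation you are missing is that $B$ and $D$ are \emph{adjacent} in both $P_1$ and $P_2$: the diagonal $BD$ becomes an edge of each sub-polygon. In a generic convex polygon two adjacent vertices cannot both be locally maximal-extremal, since $V_{i-1}\prec V_i\succ V_{i+1}$ and $V_i\prec V_{i+1}\succ V_{i+2}$ would force $V_i\succ V_{i+1}$ and $V_i\prec V_{i+1}$ simultaneously. Hence at most one of $B,D$ is maximal-extremal in $P_1$, and at most one in $P_2$. In your bookkeeping this gives
\[
(\text{gain at }B)+(\text{gain at }D)\;\le\;1+1-0-0\;=\;2,
\]
so the configuration ``both $B$ and $D$ carry a $+2$ diagonal gain'' simply cannot occur. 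More generally, among the four \emph{consecutive} vertices $A,B,D,E$ of $P_1$ at most two can be maximal-extremal, and likewise for $C,B,D,F$ in $P_2$. This is precisely why the paper's enumeration only ever speaks of gaining at most two maximal-extremal vertices per sub-polygon and reduces, up to symmetry, to the three patterns $(2,2,0)$, $(2,2,1)$, $(2,1,0)$.

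Once you feed this adjacency constraint into your framework, the residual cases are exactly the paper's three, each of which is excluded by one of the lemmas (or a $B\!\leftrightarrow\!D$, $P_1\!\leftrightarrow\!P_2$ variant). So your proposal is correct in spirit but incomplete as written; the missing ingredient is this elementary parity/adjacency observation, not a further circle-inclusion argument.
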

\begin{proof}
We note that only six vertices are affected by a decomposition from
the local point of view: the vertices of the diagonal and
the neighbors of those vertices. So, we eliminate the
cases which violate our inequality. It is easy to check that by the
symmetry of our cases, we only need to check three:

\noindent\textit{Case 1:} We gain two maximal-extremal vertices in
$P_1$, as well as $P_2$, but none of the six vertices are
maximal-extremal in $P$.

\noindent\textit{Case 2:} We gain two maximal-extremal vertices in
$P_1$ and gain two maximal-extremal vertex in $P_2$, and one of the
six vertices is maximal-extremal in $P$.

\noindent\textit{Case 3:} We gain two maximal-extremal vertices in
$P_1$ and gain one maximal-extremal vertex in $P_2$, and none of the
six vertices is maximal-extremal in $P$.

By checking the possible configurations of vertices in each of the
cases, we see that each case admits a configuration which is deemed
not feasible by one of the three preceding lemmas.
\end{proof}

\begin{cor}[The Local Four-Vertex Theorem]
\label{cor:newlocal} Let $P$ be a generic convex polygon with at
least six vertices. Then
$$l_{+}(P)+l_{-}(P)\geq 4.$$
\end{cor}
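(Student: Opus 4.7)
The plan is to imitate the inductive proof of Corollary \ref{cor:newglobal}, but the argument is considerably cleaner since Theorem \ref{thm:localineq} has no Delaunay-type hypothesis on the decomposition diagonal; we are therefore free to choose any diagonal we like and need no analogue of Lemma \ref{lemma:triangulation}. The induction is on the number of vertices $n$ of $P$.

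For the base case $n=6$, I would pick any diagonal decomposing $P$ into two quadrilaterals $P_1$ and $P_2$; these are generic and convex since $P$ is. Proposition \ref{prop:quad} gives four locally extremal vertices in each $P_i$, and Proposition \ref{prop:maxmin} splits them as $l_{-}(P_i)=l_{+}(P_i)=2$. Applying Theorem \ref{thm:localineq} yields $l_{-}(P)\geq 2+2-2=2$, and Proposition \ref{prop:maxmin} applied to $P$ then gives $l_{+}(P)=l_{-}(P)\geq 2$, so $l_{+}(P)+l_{-}(P)\geq 4$.

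For the inductive step $n\geq 7$, I would take any diagonal whose two endpoints are separated by three vertices along $P$ (for instance $V_1V_4$). This produces a quadrilateral $P_1$ and a polygon $P_2$ with $n-2\geq 5$ vertices; both have at least four vertices and strictly fewer than $n$, so the inductive hypothesis gives $l_{+}(P_i)+l_{-}(P_i)\geq 4$ for $i=1,2$. Since $P_1$ and $P_2$ inherit genericity and convexity from $P$, Proposition \ref{prop:maxmin} forces $l_{-}(P_i)\geq 2$. Theorem \ref{thm:localineq} then gives $l_{-}(P)\geq 2+2-2=2$, and Proposition \ref{prop:maxmin} applied to $P$ again upgrades this to $l_{+}(P)+l_{-}(P)\geq 4$.

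There is no genuine obstacle here; all the hard case analysis was absorbed into Theorem \ref{thm:localineq} and its three supporting lemmas. The only mild subtlety is to note that, unlike the global case where Lemma \ref{lemma:decompglobal} was needed as a separate base case because a Delaunay triangulation of a hexagon may fail to split it into two quadrilaterals, in the local setting we are not constrained to diagonals of a Delaunay triangulation, so the base case and the inductive step can be handled uniformly by simply choosing a convenient diagonal.
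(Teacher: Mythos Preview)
Your argument is essentially identical to the paper's: both induct on $n$, dispatch $n=6$ by decomposing into two quadrilaterals and invoking Proposition~\ref{prop:quad}, and for $n\geq 7$ feed the pieces into Theorem~\ref{thm:localineq} and finish with Proposition~\ref{prop:maxmin}. Your observation that no Delaunay hypothesis is needed, so the diagonal may be chosen freely, is exactly why the local argument is cleaner than the global one.

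One small looseness you share with the paper: at $n=7$ every admissible decomposition produces a quadrilateral and a \emph{pentagon}, and neither Proposition~\ref{prop:quad} nor the inductive hypothesis (which, as stated, begins at $n\geq 6$) covers the pentagon directly; your sentence ``the inductive hypothesis gives $l_{+}(P_i)+l_{-}(P_i)\geq 4$'' therefore overreaches slightly. The paper is equally silent on this point, so you are not diverging from its level of rigor, but a fully clean write-up would either treat $n=4,5$ as additional base cases or supply a separate one-line argument for the pentagon.
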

\begin{proof}
We apply induction on the number of vertices of $P$. For the case
where $n=6$, we know that if we apply a decomposition to $P$, then
both $P_1$ and $P_2$ will be quadrilaterals. Proposition \ref{prop:quad}
yields that $l_{-}(P_1)=l_{-}(P_2)=2$. Applying this to Theorem
\ref{thm:localineq} completes the proof for this case.

Now, assume that $n\geq 7$. We now apply induction
to the smaller polygons $P_1$ and $P_2$ to obtain that
$l_{-}(P_1)\geq 2$ and $l_{-}(P_2)\geq 2$. We now apply this to Theorem \ref{thm:localineq} to obtain that $l_{-}(P)\geq 2$. By Proposition \ref{prop:maxmin}, we obtain that $l_{+}(P)\geq 2$. Therefore $l_{+}(P)+l_{-}(P)\geq 4$, proving the assertion.
\end{proof}

\section{Acknowledgements}

The author would like to thank his advisor Oleg R. Musin for his
guidance and insight pertaining to the problem, as well as colleague
Arseniy Akopyan for thought provoking discussions.

\end{document}